\numberwithin{equation}{section}
\theoremstyle{definition}
\newtheorem{definition}{Definition}[section]
\theoremstyle{remark}
\newtheorem{remark}[definition]{Remark}
\theoremstyle{plain}
\newtheorem{proposition}[definition]{Proposition}
\newtheorem{theorem}[definition]{Theorem}
\newtheorem{lemma}[definition]{Lemma}
\newtheorem{corollary}[definition]{Corollary}
\definecolor{Red}{rgb}{1,0,0}
\definecolor{Magenta}{rgb}{0.69,0,0.83}
\definecolor{Green}{rgb}{0.117,0.706,0.314}
\definecolor{Grey}{rgb}{0.8,0.8,0.8}
\newcommand{\koba}{\mathsf{k}}
\newcommand*{\defeq}{\mathrel{\vcenter{\baselineskip0.5ex \lineskiplimit0pt \hbox{\scriptsize.}\hbox{\scriptsize.}}}=}
\newcommand{\C}{\mathbb{C}} 
\newcommand{\R}{\mathbb{R}}
\newcommand{\posint}{\mathbb{Z}_{+}}
\begin{document}
\title{Localization of the Kobayashi distance for any visibility domain}
	
\author{Amar Deep Sarkar}

\address{ADS: Indian Institute of Science Education and Research Kolkata, India}
\email{amar.pdf@iiserkol.ac.in}

\keywords{Visibility, weak visibility, Kobayashi distance, localization}
	
\subjclass{Primary: 32F45}

\thanks{The author is supported by the postdoctoral fellowship of Indian Institute of Science Education and Research Kolkata.}	
\begin{abstract}
In this article, we prove localization results for the Kobayashi distance of Kobayashi hyperbolic domains with local visibility property in $\C^d$, $d \geq 1$. This is done by proving a localization result for the Kobayashi-Royden pseudometric, along with some other results for domains satisfying local weak visibility.
\end{abstract}

\maketitle 
\section{Introduction}
Let $\Omega \subset \C^d$, $d \geq 1$, be a domain and $\koba_{\Omega}: \Omega \times \Omega \longrightarrow \R_{\geq 0}$ and $\kappa: \Omega \times \C^d \longrightarrow \R_{\geq 0}$ denote the Kobayashi pseudodistance and Kobayashi-Royden pseudometric respectively. When $\koba_{\Omega}$ is indeed a distance, we say that $\Omega$ is a Kobayashi hyperbolic domain. The aim of this paper is to obtain localization results for Kobayashi distance under the assumption of weak visibility property (see Definition~\ref{D:Weak_vis}). These localization results could be used to infer about the global geometry from the local geometry and vice versa.\medskip

This kind of  localization result (see Theorem~\ref{T:Main_Thm_1}) is well-known for bounded strongly convex and strongly pseudoconvex domains (cf. \cite{Abate, Balogh_Bonk, Jarnicki_Pflug}); from the work of Zimmer \cite{Zimmer_1, Zimmer_2} it follows that such kind of localization is also true for bounded convexifiable domains of finite type, $\C$-strictly convex domains with $C^{1, \alpha}$ boundary. Recently, Liu--Wang obtained such kind of localization result under the assumption that locally the domains are log-type convex domains, see \cite{Liu_et_al}, and this was substantially generalized by Bracci--Nikolov--Thomas under the assumption that the domains are locally convex and locally has weak visibility property, see \cite{BNT}. In all the above results mentioned, the local convexity or convexifiability assumption is present along with the weak visibility property. However, there exists a large class of domains in $\C^d$ which are not locally convex or locally convexifiable. For example, not all pseudoconvex domains of finite type near a boundary point is locally convexifiable; however, these domains satisfy local visibility property, this follows from the work of Bharali--Zimmer \cite{Bharali_Zimmer_1}, also see \cite{BM}, \cite{BNT}, \cite{CMS}.
\medskip

Now we present our first main result which shows that no assumption of local convexity is required; only local weak visibility assumption is enough to get such localization result. This generalizes \cite[Theorem~1.1]{BNT}. The idea of the proof is inspired by the proof of Theorem~1.1 in \cite{BNT} along with a few important new observations.

\medskip
\begin{theorem}\label{T:Main_Thm_1}
Suppose $\Omega \subset  \C^d$ is a Kobayashi hyperbolic domain and $U$ is an open subset of $\C^d$ such that $U \cap \partial \Omega \neq \emptyset$ and $U \cap \Omega$ is connected. Suppose $( U \cap \Omega, \koba_{U \cap \Omega} )$ has weak visibility property for every pair of distinct points in $ U \cap \partial \Omega$. Then for every open sets $W, W_0$ with $W\cap \Omega \neq \emptyset$, $W \subset \subset W_0 \subset \subset U$ and $\koba_{\Omega}(W_0 \cap \Omega, \Omega \setminus U) > 0$, there exists a constant $C> 0$ which depends only on $U, W, W_0$ such that for every $z, w \in W \cap \Omega$,
\[
\koba_{U \cap \Omega} (z, w) \leq \koba_{\Omega} (z, w) + C.
\]
\end{theorem}
\begin{remark}
If $\Omega$ is a bounded domain, then for every pair $W_0, U$ as in the above theorem, it follows $\koba_{\Omega}(W_0 \cap \Omega, \Omega \setminus U) > 0$. Moreover, {\it (3)} of Proposition~\ref{P:Visibility_Outside_Point} gives a  sufficient condition for $\koba_{\Omega}(W_0 \cap \Omega, \Omega \setminus U) > 0$ when $\Omega$ possibly an unbounded domain. This condition is mentioned in the previous version of this paper without a proof, and later a proof of this also appears in \cite{NOT}.
\end{remark}

Our next main result is the following
\begin{theorem}\label{T:Main_Thm_2}
Suppose $\Omega \subset  \C^d$ is a Kobayashi hyperbolic domain and $U$ is an open subset of $\C^d$ such that $U \cap \partial \Omega \neq \emptyset$. Suppose $( \Omega, \koba_{\Omega} )$ has weak visibility property for every pair of distinct points in $ U \cap \partial \Omega$. Then for every open set $W$ with $W \cap \Omega \neq \emptyset$ and $W \subset \subset U$, there exists a constant $C> 0$ which depends only on $U, W$ such that for every $z, w \in W \cap \Omega$,
\[
\koba_{U \cap \Omega} (z, w) \leq \koba_{\Omega} (z, w) + C.
\]
\end{theorem}

\begin{remark}
Note that in Theorem~\ref{T:Main_Thm_2} the weak visibility assumption is taken with respect to $(\Omega, \koba_{\Omega} )$ whereas in Theorem~\ref{T:Main_Thm_1} the weak visibility is taken with respect to $(U \cap \Omega, \koba_{U \cap \Omega} )$. The second assumption in Theorem~\ref{T:Main_Thm_1} is that $\koba_{\Omega}(W_0 \cap \Omega, \Omega \setminus U) > 0$, however there is no such assumption in Theorem~\ref{T:Main_Thm_2} --- although it is necessary in the proof --- is because of {\it (3)} of Proposition~\ref{P:Visibility_Outside_Point}.
\end{remark}
As a consequence of the above statement, we obtain the following 

\begin{corollary}\label{C:Pseudo_finte_type}
Suppose $\Omega \subset  \C^d$ is a bounded domain with $C^{\infty}$-smooth boundary and of finite D'Angelo type. Let $p \in \partial \Omega$. Then for every neighbourhood $ U$ and $W$ of $p$ with $W \subset \subset U$ there exists a constant $C> 0$ such that  for every $z, w \in W \cap \Omega$,
\[
\koba_{U \cap \Omega} (z, w) \leq \koba_{\Omega} (z, w) + C.
\]
\end{corollary}
\begin{remark}
To the best of our knowledge, such kind of additive localization result for the Kobayashi distance has not been obtained before in this generality. The above corollary could be mentioned for a larger class of domains. We refer the reader to the following papers for such examples of domains: \cite{Bharali_Zimmer_1, Bharali_Zimmer_2, BM, BNT, CMS}. These domains are not necessarily of finite type near every boundary point and in many cases, the
type may not even be defined because of the low regularity of the boundary of the domains. We also emphasize that these domains need not be Cauchy-complete with respect to the Kobayashi distance.
\end{remark}

\subsection*{An application of additive localization}
Next, we show that for bounded domains, multiplicative localization (given below) as a consequence of the additive localization. This shows that additive localization is indeed stronger than multiplicative localization, at least in this case.
\begin{theorem}\label{T:Main_Thm_3}
Suppose $\Omega \subset  \C^d$ is a bounded domain. Suppose $U$ and $W$ are open subsets of $\C^d$ with $W \subset \subset U$, $W \cap \Omega \neq \emptyset$ and $U \cap \Omega$ is connected, and there exists a constant $C_0> 0$ such that for all $z, w \in W \cap \Omega$
\[
\koba_{U \cap \Omega}(z, w) \leq \koba_{\Omega}(z, w) + C_0.
\]
Then there exists a constant  $ C \geq  1$ such that for all $z, w \in W \cap \Omega$,
\[
\koba_{U \cap \Omega}(z, w) \leq C\koba_{\Omega}(z, w).
\]
\end{theorem}
The above theorem follows from a more general result with no boundedness assumption on $\Omega$ but with some other conditions.
\begin{theorem}\label{T:Main_Thm_4}
Suppose $\Omega \subset  \C^d$ is a Kobayashi hyperbolic domain and $W_1, W_2$ are open subsets of $\C^d$ with $W_1 \subset \subset W_2$, $W_1 \cap  \Omega \neq \emptyset$ and $\koba_{\Omega}(W_1 \cap \Omega, \Omega \setminus W_2) > 0$. Suppose $U$ and $W$ are open subsets of with $W \subset \subset W_1 \subset \subset W_2 \subset \subset U$, $W \cap \Omega \neq \emptyset$, $U \cap \Omega$ is connected, and there exists a constant $C_0> 0$ such that for all $z, w \in W \cap \Omega$
\[
\koba_{U \cap \Omega}(z, w) \leq \koba_{\Omega}(z, w) + C_0.
\]
Then there exists a constant  $ C \geq  1$ such that for all $z, w \in W \cap \Omega$,
\[
\koba_{U \cap \Omega}(z, w) \leq C\koba_{\Omega}(z, w).
\]
\end{theorem}
\begin{remark}
The above result is proved using a weaker assumption than that of Theorem~1.4 in \cite{NOT} although the proofs are similar. It can be applied in situations where no form of visibility is present, but additive localization is present. We do not know any non-trivial examples of such domains.
\end{remark}

Very recently, similar localization results are obtained by Nikolov--\"Okten--Thomas \cite{NOT}  under some weaker conditions along with results on local and global visibility. 
\section{Preliminaries}
\subsection{Notations and conventions}

Let $\Omega \subset \C^d$, $d \geq 1$, be a domain, $U \subset \C^d$ an open set, and  $X, Y \subset \C^d $. We denote $\Omega \setminus X$ as the complement of $X \cap \Omega$ in $\Omega$. 
\begin{itemize}
    \item $X \subset \subset U$ means that $X$ is a relatively compact subset of $U$, i.e., $\overline{X} \subset U$ and $\overline{X}$ is a compact set.
    \item For $z \in \Omega$, $\delta_{\Omega}(z) = \inf_{w \in \C^d \setminus \Omega}||z - w||$, where $||z - w||$ denotes the Euclidean distance between $z$ and $w$.
    \item For $z \in \Omega$, $\koba_{\Omega}(z, X) \defeq \inf_{x \in X}\koba_{\Omega}(z, x)$.
    
    \item $\koba_{\Omega}(X, Y) \defeq \inf_{x \in X, y \in Y}\koba_{\Omega}(x, y)$.
    \item Throughout this paper, whenever for an open set $U \subset \C^d$  if $U \cap \partial \Omega \neq \emptyset$, we assume the cardinality of $U \cap \partial \Omega$ is greater than one, i.e., $\#(U \cap \partial \Omega) > 1$, because when $\#(U \cap \partial \Omega) = 1$ the main results of this paper follows trivially. Here $\partial \Omega$ denotes the boundary of the domain $\Omega$.
    \item For a curve $\gamma: I \longrightarrow \Omega$ of an interval $I \subset \R$, by a slight abuse of notation we denote the range of the curve $\gamma$ by $\gamma$ itself.
\end{itemize}

\begin{definition}
For $ \lambda \geq 1$ and $\kappa \geq 0$, a curve $\gamma: I \longrightarrow \Omega$ of an interval $I \subset \R$ is said to be $(\lambda, \kappa)${\bf -quasi-geodesic} if for all $s, t \in I$
\[
\frac{1}{\lambda}|t - s| - \kappa\leq \koba_{\Omega}(\gamma(s), \gamma(t) ) \leq \lambda |s -t| + \kappa,
\]
and when $\lambda =1$ and $\kappa = 0$ $\gamma$ is called a geodesic.
Furthermore, for $ \lambda \geq 1$ and $\kappa \geq 0$, a $(\lambda, \kappa)$-quasi-geodesic $\gamma: I \longrightarrow \Omega$ is said to be $(\lambda, \kappa)${\bf-almost-geodesic} of  $( \Omega, \koba_{\Omega} )$ if,
\begin{itemize}
\item $\gamma$ is an absolutely continuous curve, and
\item $\kappa_{\Omega}(\gamma(t); \gamma^{\prime}(t) ) \leq \lambda$ for almost every $t \in I$, ($\gamma^{\prime}$ exists almost everywhere on $I$ since $\gamma$ is absolutely continuous).
\end{itemize}
\end{definition}

Given any $ \kappa > 0$ and $\Omega \subset \C^d$ a Kobayashi hyperbolic domain (need not be bounded), it is shown in \cite[Proposition~5.3]{Bharali_Zimmer_2} that given any two points $z, w \in \Omega$ there exists a $(1, \kappa)$-almost-geodesic $
\gamma: [0, a] \longrightarrow \Omega$ joining $z$ and $w$, that is, $\gamma(0) = z$ and $\gamma(a) = w$.
Now, we state the definition of visibility and weak visibility.
\begin{definition}\label{D:Weak_vis}
Let $\Omega \subset \C^d$ be a Kobayashi hyperbolic domain. For $\lambda \geq 1$ and $\kappa \geq 0$, we say a pair of points $p, q \in \partial \Omega$, $p \neq q$ has visibility property with respect to $( \Omega, \koba_{\Omega} )$ if there exist neighbourhoods $U$ of $p$ and $V$ of $q$ in $\C^d$ such that $\overline U \cap \overline V \neq \emptyset$ and a compact set $K \subset \Omega$ such that for any $(\lambda, \kappa)$-almost-geodesic $\gamma: [0, a] \longrightarrow \Omega$ with $\gamma(0) \in U \cap \Omega$ and $\gamma(a) \in V \cap \Omega$, $\gamma \cap K \neq \emptyset$. When the above condition is only required for $\lambda = 1$ and $\kappa \geq 0$, we say that the distinct pair $p, q$ has {\bf weak visibility} property with respect to $(\Omega, \koba_{\Omega} )$.
\end{definition}
\section{Proofs of the main results}

The following localization lemma, by L. H. Royden \cite[Lemma~2]{Royden}, whose proof can be found in \cite[Lemma~4]{Ian_Gramham}, is used to prove localization of the Kobayashi distance and to study the relation between local and global visibility and Gromov hyperbolicity in \cite{BNT} and \cite{BGNT} respectively. 
\medskip

\noindent {\bf Royden's Localization Lemma.}
Suppose $\Omega \subset \C^d $ is a Kobayashi hyperbolic domain and $U $ is an open subset of $\C^d$ such that $U \cap \Omega \neq \emptyset$ and $U \cap \Omega $ is connected. 
Then for all $z \in U \cap \Omega$ and $v \in \C^d$,
\begin{equation}
\kappa_{\Omega}(z; v) \leq \kappa_{U \cap \Omega}(z; v) \leq \coth(\koba_{\Omega}(z, \Omega \setminus U)) \kappa_{\Omega}(z; v).
\end{equation}

Next, we prove a lemma that is used to prove the localization of the Kobayashi distance for visibility domains stated above.
\begin{lemma}\label{L:Varied_Loc_Kob_metric}
Suppose $\Omega \subset \C^d $ is a Kobayashi hyperbolic domain and $U $ is an open subset of $\C^d$ such that $U \cap \Omega \neq \emptyset$ and connected. 
Then for every $W \subset \subset U$ with $W \cap \Omega \neq \emptyset$ and $\koba_{\Omega}(W \cap \Omega, \Omega \setminus U) > 0$, there exists $L > 0$ such that for all $z \in W \cap \Omega$ and $v \in \C^d$,
\begin{equation}
\kappa_{U \cap \Omega}(z; v) \leq \left( 1 + L e^{-\koba_{\Omega} (z, \Omega \setminus U) } \right)\kappa_{\Omega}(z; v).
\end{equation}
\end{lemma}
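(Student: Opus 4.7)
My plan is to derive the inequality as a direct consequence of Royden's Localization Lemma combined with the standing hypothesis (stated in the preamble) that
\[
r_W \ :=\ \inf_{z \in W \cap \Omega} \koba_{\Omega}(z, U^c) \ >\ 0.
\]
The key observation is the elementary identity
\[
\coth(r) \ =\ 1 + \frac{e^{-r}}{\sinh(r)}, \qquad r > 0,
\]
which recasts Royden's multiplicative factor in precisely the shape $1 + (\mathrm{const}) \cdot e^{-r}$ appearing on the right-hand side of the desired bound.

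Concretely, for any $z \in W \cap \Omega$ and $v \in \C^d$, I would first apply Royden's lemma to get
\[
\kappa_{U \cap \Omega}(z; v) \ \leq\ \coth\bigl(\koba_{\Omega}(z, U^c)\bigr)\, \kappa_{\Omega}(z; v).
\]
Setting $r(z) := \koba_{\Omega}(z, U^c)$ and substituting the identity above, I would rewrite this as
\[
\kappa_{U \cap \Omega}(z; v) \ \leq\ \left(1 + \frac{e^{-r(z)}}{\sinh(r(z))}\right)\kappa_{\Omega}(z; v).
\]
Since $r(z) \geq r_W$ for every $z \in W \cap \Omega$ and $\sinh$ is strictly increasing on $(0,\infty)$, I would bound $1/\sinh(r(z)) \leq 1/\sinh(r_W)$, so that the constant $L := 1/\sinh(r_W)$ --- which depends only on $\Omega$, $U$, and $W$ --- yields the desired inequality.

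The only substantive input here is the positivity of $r_W$, which is assumed globally in the article and which the author remarks is a necessary condition for weak-visibility. With that granted, the rest is a single hyperbolic identity together with an appeal to Royden's lemma, so I do not anticipate any serious obstacle; the main care lies in bookkeeping to confirm that $L$ genuinely depends only on the data $\Omega, U, W$ and not on the individual $z$ or $v$.
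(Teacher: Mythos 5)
Your proof is correct and follows essentially the same route as the paper: both arguments combine Royden's Localization Lemma with the standing hypothesis $\inf_{z\in W\cap\Omega}\koba_{\Omega}(z,U^c)>0$ and an elementary hyperbolic-function estimate to recast $\coth$ in the form $1+Le^{-r}$. Your exact identity $\coth(r)=1+e^{-r}/\sinh(r)$ is in fact slightly cleaner than the paper's use of the inequality $\tanh(x)\geq 1-e^{-x}$, but the substance is identical.
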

\begin{proof}
We first note the inequality,
\begin{equation}\label{E:Ineq_tanh}
    \tanh(x) \geq 1 - e^{-x}\,\, \forall \,\, x \geq 0.
\end{equation}
This follows from the following formula obtained by algebraic manipulations: for all $x \in \mathbb{R}$,
\[
\tanh(x) - (1 - e^{-x}) = \frac{(1 -e^{-x})^{2}}{e^{x} + e^{-x}} \geq 0.
\]
Now note that, for all $z \in W \cap \Omega$, we have $\coth(\koba_{\Omega}(z, \Omega \setminus U)) \leq \coth(\koba_{\Omega}(W \cap \Omega, \Omega \setminus U)) =: L < +\infty$. By the Royden's Localization Lemma, for all $z \in U\cap \Omega$ and $v \in \C^d$, we have
\[
\tanh(\koba_{\Omega}(z, \Omega \setminus U))\kappa_{U \cap \Omega}(z; v) \leq \kappa_{\Omega}(z; v).
\]
This gives, after applying $\kappa_{ U \cap \Omega}(z; v) \leq \coth(\koba_{\Omega}(z, \Omega \setminus U)) \kappa_{\Omega}(z; v) \leq L \kappa_{\Omega}(z; v) $  for every $z \in W \cap \Omega$ and (\ref{E:Ineq_tanh}), for all $z \in W \cap \Omega$ 
\begin{equation*}
\begin{split}
  \kappa_{U \cap \Omega}(z; v) 
 &\leq (1 - \tanh(\koba_{\Omega}(z, \Omega \setminus U)) ) \kappa_{U \cap \Omega}(z; v) +\kappa_{\Omega}(z; v)\\
 &\leq \left( 1 + L\left(1 - \tanh(\koba_{\Omega}(z, \Omega \setminus U)) \right)  \right) \kappa_{ \Omega}(z; v) \\
 &\leq \left(1 + Le^{- \koba_{\Omega}(z, \Omega \setminus U) } \right)\kappa_{ \Omega}(z; v) \quad [\text{by applying (\ref{E:Ineq_tanh})}].  
    \end{split}
\end{equation*}
This proves the lemma.
\end{proof}
\begin{proposition}\label{P:Visibility_Outside_Point}
Suppose $\Omega \subset  \C^d$ is a Kobayashi hyperbolic domain and $U$ is an open subset of $\C^d$, and $U \cap \partial \Omega \neq \emptyset$. Suppose $(\Omega, \koba_{\Omega} )$ has weak visibility property for  every distinct pair $\{p, q\}$, $ p,q \in U \cap \partial \Omega$. Let $W \subset \subset U$ be an open set with $W \cap \Omega \neq \emptyset$.
Then 
\begin{enumerate}
    \item for any $\xi_2 \in \partial \Omega \setminus  U$ (whenever $\partial \Omega \setminus  U \neq \emptyset$) and $\xi_1 \in U \cap \partial \Omega$, the distinct pair $\{\xi_1, \xi_2\}$ has weak visibility property with respect to $(\Omega, \koba_{\Omega} )$. Consequently, every pair $\{\xi_1, \xi_2\}$, with $\xi_1 \in U \cap \partial \Omega$, $\xi_2 \in  \partial \Omega$ and $\xi_1 \neq \xi_2$, has weak visibility property with respect to $(\Omega, \koba_{\Omega} )$. 
    
    \item Let $V$ is a subset of $\C^d$ with $V \cap \Omega \neq \emptyset$ and $\overline W \cap \overline V = \emptyset$. Then for $\kappa \geq 0$ there is a compact set $K \subset \Omega$ such for every $z \in W \cap \Omega$ and $w \in V \cap \Omega $ and $\gamma$ is a  $(1, \kappa)$-almost-geodesic joining $z$ and $w$, $\gamma \cap K \neq \emptyset$.
    \item For every open sets $W_1, W_2$ with $W \subset \subset W_1 \subset \subset W_2 \subset \subset U$, we have $\koba_{\Omega}(W_1 \cap \Omega, \Omega \setminus W_2) > 0$. Consequently, $\koba_{\Omega}(W \cap \Omega, \Omega \setminus U) > 0$.
\end{enumerate}

\end{proposition}

\begin{proof}
If possible, suppose that {\it (1)} is not true. Then we get a sequence of $(1, \kappa)$-almost-geodesics $\gamma_n: [0, a_n] \longrightarrow \Omega$ such that $\gamma_n(0)  \to \xi_1$ and  $\gamma_n(a_n)  \to \xi_2$ and $ \lim_{n \to \infty} \sup_{z \in \gamma_n \cap B(0, R)}\delta_{\Omega}(z) = 0 $, for all $R> R_0$, where we choose $R_0 > 0$ such that $\gamma_n \cap  B(0, R_0) \neq \emptyset $ for all $n \in \posint$. Since $U$ is an open subset of $\C^d$ and $\xi_1 \in U$, there exists $0 < r < ||\xi_1 - \xi_2||/4$ such that the open ball of radius $2r$ centred at $\xi_1$, $B(\xi_1, 2r) \subset U$. Without loss of generality (by passing to a subsequence if necessary), we may assume that $\gamma_n(0) \in B(\xi_1, r/2)$ and $\gamma_n(a_n) \in B(\xi_2, r/2)$ for all $n \in \posint$. Now, let
\[
t_n \defeq \inf\{t \in [0, a_n] : \gamma_n(t) \in \partial B(\xi_1, r) \cap \Omega \}.
\]
Again, by passing to a subsequence, we may assume that $ \gamma_n(t_n) \to \xi_0 \in (\partial B(\xi_1, r) \cap \partial \Omega) \subset U \cap \partial \Omega$. Now, note that the  sequence of $(1, \kappa)$-almost-geodesics defined as
\[
\sigma_n \defeq \gamma_n|_{[0, t_n]}
\]
shows that the distinct pair $\{ \xi_1, \xi_0 \} \subset  U \cap \partial \Omega $ does not have weak visibility property with respect to $(\Omega, \koba_{\Omega} )$. This gives a contradiction to our assumption. This proves {\it (1)}.

Next, we shall prove {\it (2)}. If possible, to get a contradiction, suppose that {\it (2)} does not hold. Then there exist $\kappa \geq 0$, sequences of points $z_n \in W \cap \Omega$ and $w_n \in V \cap \Omega$ and $(1, \kappa)-$almost-geodesic $\gamma_n$ of $(\Omega, \koba_{\Omega} )$ joining $z_n$ and $w_n$ for all $n \in \posint$, such that $\lim_{n \to \infty} \sup_{z \in \gamma_n \cap B(0, R)}\delta_{\Omega}(z) = 0 $, for all $R> R_1$, where we choose  $R_1 > 0$ such that $ \overline W \subset   B(0, R_1)  $. By passing to a subsequence, we may assume that $z_n \to \xi'_1 \in \overline W \cap \partial \Omega \subset U \cap \partial \Omega $ as $n \to \infty$. 
Since $\overline {W \cap \Omega}  \cap \overline{V} = \emptyset$, let $0 < r_1 < \frac{||x - y||}{4}$ for all $x \in W \cap \Omega $ and $y \in V$. Assume, by passing to a subsequence, $z_n \in B(\xi'_1, r_1/2) $ for all $n \in \posint$, and define
\[
t'_n \defeq \inf\{t \in [0, a_n] : \gamma_n(t) \in \partial B(\xi'_1, r_1) \cap \Omega \}.
\]
Without loss of generality, we may assume that $\gamma_n(t'_n) \to \xi'_2 \in \partial B(\xi'_1, r_1) \cap \partial \Omega$ as $n \to \infty$. By construction,
 $\xi'_1 \neq \xi'_2$. Now, note that the  sequence of $(1, \kappa)$-almost-geodesics defined as
\[
\sigma'_n \defeq \gamma_n|_{[0, t'_n]}
\]
shows that the distinct pair $\{ \xi'_1, \xi'_2 \} \subset  \partial \Omega $ with $\xi'_1 \in U \cap \partial \Omega$ does not have weak visibility property with respect to $(\Omega, \koba_{\Omega} )$. This gives a contradiction to {\it (1)}. Hence, {\it (2)} holds true.

To prove {\it (3)} by contradiction if possible let $\koba_{\Omega}(W_1 \cap \Omega, \Omega \setminus W_2) = 0$. Then there exist sequences of points $z_n \in W_1 \cap \Omega$ and $w_n \in \Omega \setminus W_2$ such that $\koba_{\Omega}(z_n, w_n) \to 0$ as $n \to \infty$. Let $\gamma_n: [0, a_n] \longrightarrow \Omega$ is a sequence of $(1, 1/n)$-almost-geodesic such that $\gamma_n(0) = z_n \in W_1 \cap \Omega$ and $\gamma_n(a_n) = w_n \in \Omega \setminus W_2$ for all $n \in \posint$. Since $W_1$ is a relatively compact subset of U, and $\overline {W_1} \cap \overline{\Omega \setminus W_2} = \emptyset$, by {\it (2)}, for $\kappa \geq 1 $ there exists a compact set $K \subset \Omega$ such that for every $z \in W_1 \cap \Omega$ and $w \in \Omega \setminus W_2 $ if $\gamma$ is a $(1, \kappa)$-almost-geodesic joining $z$ and $w$, then $\gamma \cap K \neq \emptyset$. Hence, 
    $\gamma_n \cap K \neq \emptyset$ for all $n \in \posint$. Let $o_n \in \gamma_n \cap K$ and if necessary by passing to a subsequence, assume that $o_n \to o \in K$. This gives, using the fact that $\gamma_n$ is a $(1, 1/n)$-almost-geodesic passing through $o_n$,
    \[
    \koba_{\Omega}(z_n, o_n) + \koba_{\Omega}(o_n, w_n) - 3/n \leq \koba_{\Omega}(z_n, w_n). 
    \]
    This implies as $n \to \infty$, $\koba_{\Omega}(z_n, o_n) \to 0$ and $\koba_{\Omega}( o_n, w_n) \to 0$. Since $\Omega$ is a Kobayashi hyperbolic, it follows $z_n \to o$ and $w_n \to o$. This is a contradiction because $z_n \in W_1 \cap \Omega$, $w_n \in \Omega \setminus W_2$ for all $n \in \posint$, and $\overline{W_1 \cap \Omega} \cap \overline{\Omega \setminus W_2} = \emptyset$. Hence $\koba_{\Omega}(W_1 \cap \Omega, \Omega \setminus W_2) >  0$. 
    
    Moreover, using the fact that if $X, Y \subset \Omega$ and $A \subset X$ and $B \subset Y$, then by definition $\koba_{\Omega}( A, B) \geq \koba_{\Omega}(X, Y)$, we have $\koba_{\Omega}(W \cap \Omega, \Omega \setminus U) \geq \koba_{\Omega}(W_1 \cap \Omega, \Omega \setminus W_2) >  0$. This completes the proof of {\it (3)}.
\end{proof}

\begin{lemma}\label{L:Comple_dist_Compar}
Suppose $\Omega \subset  \C^d$ is a Kobayashi hyperbolic domain and $U$ is an open subset of $\C^d$ such that  $U \cap \partial \Omega \neq \emptyset$. Suppose $( \Omega, \koba_{\Omega} )$  has weak visibility property for every pair of distinct points $\{p, q\}$, $p, q \in U \cap \partial \Omega$.
Then for every open sets $W_1, W_2$ with $W_1 \subset \subset W_2 \subset \subset U$, $W_1 \cap \Omega \neq \emptyset$ and $o \in \Omega$ there exists $L > 0$ such that for all $z \in W_1 \cap \Omega$ 
\[
\koba_{\Omega} ( z, o) \leq \koba_{\Omega} (z, \Omega \setminus W_2) + L.
\]
\end{lemma}
\begin{proof}
Let $z \in W_1 \cap \Omega$ and $w \in \Omega \setminus W_2$.
Let $\kappa > 0$ and $\gamma$ is a $(1, \kappa)$-almost-geodesic joining $z$ and $w$. Then, by {\it (2)} of Proposition~\ref{P:Visibility_Outside_Point}, there exists a compact subset $K \subset \Omega$ such that $\gamma \cap K \neq \emptyset$. Let $o_w \in \gamma \cap K$, then we have (using the fact that $\gamma$ is $(1, \kappa)$-almost-geodesic)
\begin{equation*}
    \begin{split}
     \koba_{\Omega}(z, o) 
     &\leq \koba_{\Omega}(z, o_w) + \koba_{\Omega}(o_w, o) \\
     & \leq \koba_{\Omega}(z, w) + 2\kappa + \sup_{y \in K}\koba_{\Omega}(y, o).
     \end{split}
\end{equation*}
Since $w \in \Omega \setminus W_2$ is arbitrary, by taking $L \defeq 2\kappa + \sup_{y \in K}\koba_{\Omega}(y, o) $, we have,
\[
\koba_{\Omega} ( z, o) \leq \koba_{\Omega} (z, \Omega \setminus W_2) + L.
\]
\end{proof}
As a corollary of the above lemma, we have a similar local result.

\begin{lemma}\label{L:Comple_dist_Compar_local}
Suppose $\Omega \subset  \C^d$ is a Kobayashi hyperbolic domain and $U$ is an open subset of $\C^d$ such that $U \cap \partial \Omega \neq \emptyset$ and $U \cap \Omega $ is connected. Suppose $( U \cap \Omega, \koba_{U \cap \Omega} )$ has weak visibility property for every pair of distinct points $\{p, q\}$, $p, q \in U \cap \partial \Omega$.   
Then for every open sets $W_1, W_2$ with $W_1 \subset \subset W_2 \subset \subset U$, $W_1 \cap \Omega \neq \emptyset$ and $o \in U \cap\Omega$ there exists $L > 0$ such that for all $z \in W_1 \cap \Omega$ 
\[
\koba_{U \cap \Omega} ( z, o) \leq \koba_{U \cap \Omega} (z, U \cap \Omega \setminus W_2) + L.
\]
\end{lemma}
\begin{proof}
The proof follows replacing $\Omega$ by $U \cap \Omega$ in Lemma~\ref{L:Comple_dist_Compar}. 
\end{proof}

\begin{lemma}\label{L:One_kappa_geodesic}
Suppose $\Omega \subset  \C^d$ is a Kobayashi hyperbolic domain and $U$ is an open subset of $\C^d$ such that $U \cap \partial \Omega \neq \emptyset$ and $U \cap \Omega $ is connected. Suppose  $( U \cap \Omega, \koba_{U \cap \Omega} )$ has weak visibility property for every pair of distinct points $\{p, q\}$, $p, q \in U \cap \partial \Omega$. Let $W, W_1$ be open sets with $W \subset \subset W_1 \subset \subset U $, $W \cap \Omega \neq \emptyset$ and $\koba_{\Omega}(W_1 \cap \Omega, \Omega \setminus U) > 0$, and suppose that $z, w \in W \cap \Omega $ and  $\gamma$ is a $(1, \kappa)$-almost-geodesic of $(\Omega, \koba_{\Omega})$ joining $z$ and $w$ such that $\gamma \subset W$. Then there exists a $\kappa_0 > 0$ such that $\gamma$ is $(1, \kappa_0)$-quasi-geodesic of $(U \cap \Omega, \koba_{U \cap \Omega} )$.
\end{lemma}
\begin{proof}
By Lemma~\ref{L:Varied_Loc_Kob_metric}, there exists $L > 0$ which depends on $U, W_1$ such that for all $z \in W_1 \cap \Omega$ and $v \in \C^d$, 
\begin{equation*}
\kappa_{U \cap \Omega}(z; v) \leq \left( 1 + L e^{-\koba_{\Omega} (z, \Omega \setminus U) } \right)\kappa_{\Omega}(z; v).
\end{equation*}
Since $\koba_{\Omega} (z, \Omega \setminus W_1) \leq \koba_{\Omega} (z, \Omega \setminus U)$, we have
\begin{equation}\label{E:Lemma_use_Roy_loc}
\kappa_{U \cap \Omega}(z; v) \leq \left( 1 + L e^{-\koba_{\Omega} (z, \Omega \setminus W_1) } \right)\kappa_{\Omega}(z; v).
\end{equation}

Suppose that $\gamma: [0, a] \longrightarrow \Omega$, $a \geq 0$.
Since $\gamma \subset W$, from above inequality and by \cite[Theorem~3.1]{Venturini}, for every $s_1, s_2 \in [0, a]$, we have
\begin{equation}\label{E:One_kappa_geo_ineq}
\begin{split}
\koba_{U \cap \Omega}(\gamma(s_1), \gamma(s_2)) 
& \leq \int_{s_1}^{s_2} \kappa_{U \cap \Omega}(\gamma(t); \gamma^{\prime}(t)) dt\\
& \leq \int_{s_1}^{s_2} \left( 1 + L e^{-\koba_{\Omega} (\gamma(t), \Omega \setminus W_1) } \right)\kappa_{\Omega}(\gamma(t); \gamma^{\prime}(t)) dt\\
&\leq  |s_2 - s_1| + L \int_{s_1}^{s_2} e^{-\koba_{\Omega} (\gamma(t), \Omega \setminus W_1) } dt  \\ 
& \leq \koba_{\Omega}(\gamma(s_1), \gamma(s_2)) + \kappa + L \int_{0}^{a} e^{-\koba_{\Omega} (\gamma(t), \Omega \setminus W_1) } dt.
\end{split}
\end{equation}
{\bf Claim.}
There exist $C_0, C_1 > 0$ such that for all $z \in W \cap \Omega$ 
\[
\koba_{U  \cap \Omega}(z, U \cap \Omega \setminus W_1) \leq C_0 \koba_{\Omega}(z, \Omega \setminus W_1) + C_1. 
\]
Assuming the claim and deferring the proof, first note that, for $o \in U \cap \Omega$, by Lemma~\ref{L:Comple_dist_Compar_local}, there exists $L_0 > 0$ which depends on $o, W, W_1, U$ such that for all $z \in W \cap \Omega$, 
\[
\koba_{U \cap \Omega} ( z, o) \leq \koba_{U \cap \Omega} (z, U \cap \Omega \setminus W_1) + L_0.
\]
Next, let $t_0 \in [0, a]$ such that for all $t \in [0, a]$,
\[
\koba_{\Omega}(\gamma(t_0), \Omega \setminus W_1) \leq \koba_{\Omega}(\gamma(t), \Omega \setminus W_1).
\]
Now, applying the above inequalities in the following computation, we obtain, for all $t \in [0, a]$
\begin{equation*}
   \begin{split}
       |t - t_0| - \kappa 
       &\leq \koba_{\Omega}(\gamma(t), \gamma(t_0))\\ 
       & \leq \koba_{\Omega}(\gamma(t), o) + \koba_{\Omega}(\gamma(t_0), o)\\
       & \leq \koba_{U \cap \Omega}(\gamma(t), o) + \koba_{U \cap \Omega}(\gamma(t_0), o)\\
       &\leq \koba_{U \cap \Omega} (\gamma(t), U \cap \Omega \setminus W_1) + \koba_{U \cap \Omega} (\gamma(t_0), U \cap \Omega \setminus W_1) + 2L_0\\
       &\leq 2\koba_{U \cap \Omega} (\gamma(t), U \cap \Omega \setminus W_1) + 2L_0\\
   \end{split} 
\end{equation*}
If we apply the claim to the above inequality, we get
\begin{equation*}
   \begin{split}
       |t - t_0| - \kappa 
       &\leq 2\koba_{U \cap \Omega} (\gamma(t), U \cap \Omega \setminus W_1) + 2L_0\\
       & \leq 2C_0 \koba_{\Omega}(\gamma(t), \Omega \setminus W_1) + 2C_1 + 2 L_0. 
   \end{split} 
\end{equation*}
This gives, for all $ t \in [0, a]$
\[
-\koba_{\Omega}(\gamma(t), \Omega \setminus W_1) \leq \frac{ -|t - t_0|}{2C_0} + \frac{2C_1 + 2L_0 + \kappa}{2C_0}.
\]
After using the above inequality in inequality~(\ref{E:One_kappa_geo_ineq}), we obtain for every $s_1, s_2 \in [0, a]$,
\begin{equation}
\begin{split}
\koba_{U \cap \Omega}(\gamma(s_1), \gamma(s_2))
& \leq \koba_{\Omega}(\gamma(s_1), \gamma(s_2)) + \kappa + LC_3 \int_{0}^{a} e^{-\frac{|t - t_0|}{2C_0} } dt\\
&\leq \koba_{\Omega}(\gamma(s_1), \gamma(s_2)) + \kappa + LC_3 \int_{0}^{+\infty} e^{-\frac{|t - t_0|}{2C_0} } dt\\
&\leq \koba_{\Omega}(\gamma(s_1), \gamma(s_2)) + \kappa + 4LC_3 C_0
\end{split}
\end{equation}
where $C_3 = e^{\frac{2C_1 + 2L_0 + \kappa}{2C_0}}$, and the last inequality follows from the following bound:
\begin{equation}
 \int_{0}^{+\infty} e^{-\frac{|t - t_0|}{2C_0} } dt \leq 4C_0.
\end{equation}
This shows that $\gamma$ is a $(1, \kappa_0)$-quasi-geodesic for $\kappa_0 = \kappa + 4LC_3C_0$, given that the claim is true (note the fact that $\kappa_0$ depends only on $W,W_1,U$ and $\kappa$). Now, we give a proof of the claim.
\begin{proof}[Proof of the claim.]
By the Royden's Localization Lemma and setting $ \coth{\koba_{\Omega}(W_1 \cap \Omega, \Omega \setminus U)} := C_0 < +\infty$, for all $z \in W_1 \cap \Omega $ and $v \in \C^d$,
\[
\kappa_{U  \cap \Omega}(z; v) \leq \coth{\koba_{\Omega}(z, \Omega \setminus U)} \kappa_{\Omega}(z;v) \leq \coth{\koba_{\Omega}(W_1 \cap \Omega, \Omega \setminus U)} \kappa_{\Omega}(z;v) = C_0 \kappa_{\Omega}(z;v).
\]
Now, suppose that $w_n \in \Omega \setminus W_1$ such that $\lim_{n \to \infty} \koba_{\Omega}(z, w_n) = \koba_{\Omega}(z, \Omega \setminus W_1) $ and $\gamma_n : [0, a_n] \longrightarrow \Omega$ is sequence of $(1, \kappa)$-almost-geodesic of $(\Omega, \koba_{\Omega} )$ joining $z$ and $w_n$, for some $\kappa > 0$. 
\[
t_n = \sup \{t : t \in [0, a_n]\,\, \text{such that}\,\, \gamma_n(s) \in W_1 \cap \Omega \,\forall s \in [0, t]\},
\]

By definition, $\gamma_n([0, t_n)) \subset W_1 \cap \Omega$. Then,
\begin{equation*}
    \begin{split}
     \koba_{U \cap \Omega}(z, U \cap \Omega \setminus W_1) 
     &\leq \koba_{U \cap \Omega}(z, \gamma_n(t_n) )\\
     & \leq \int_{0}^{t_n} \kappa_{U  \cap \Omega}(\gamma_n(t); \gamma_n^{\prime}(t)) dt\\
     & \leq C_0 \int_{0}^{t_n} \kappa_{ \Omega}(\gamma_n(t); \gamma_n^{\prime}(t)) dt\\
     &\leq C_0 \int_{0}^{a_n} \kappa_{ \Omega}(\gamma_n(t); \gamma_n^{\prime}(t)) dt\\
     &\leq C_0 \koba_{\Omega}(z, w_n) + C_0 \kappa.\\
    \end{split}
\end{equation*}
The third inequality follows because $\kappa_{U  \cap \Omega}(\gamma_n(t); \gamma_n^{\prime}(t)) \leq C_0 \kappa_{\Omega}(\gamma_n(t); \gamma_n^{\prime}(t)) $ a.e. $t \in [0, a_n] $.
Taking limit as $n \to \infty$, we get
\[
\koba_{U \cap \Omega}(z, U \cap \Omega \setminus W_1) \leq C_0\koba_{\Omega}(z, \Omega \setminus W_1) + C_0 \kappa.
\]
This proves the claim. Hence the proof is complete.
\end{proof}
\end{proof}

\begin{lemma}\label{L:One_kappa_geodesic_Omega}
Suppose $\Omega \subset  \C^d$ is a domain and $U$ is an open subset of $\C^d$ such that $U \cap \partial \Omega \neq \emptyset$. Suppose  $(\Omega, \koba_{\Omega} )$ has weak visibility property for every pair of distinct points $\{p, q\}$, $p, q \in U \cap \partial \Omega$. Let $W \subset \subset U $ be an open set with $W \cap \Omega \neq \emptyset$, and suppose that $z, w \in W \cap \Omega $ and  $\gamma$ is a $(1, \kappa)$-almost-geodesic of $(\Omega, \koba_{\Omega})$ joining $z$ and $w$ such that $\gamma \subset W$. Then there exists a $\kappa_0 > 0$ such that $\gamma$ is $(1, \kappa_0)$-quasi-geodesic of $(U \cap \Omega, \koba_{U \cap \Omega} )$.
\end{lemma}
\begin{proof}
Let $W_1 \subset \C^d$ be an open set such that $W \subset \subset W_1 \subset \subset U$. By {\it (3)} of Proposition~\ref{P:Visibility_Outside_Point}, $\koba_{\Omega}(W_1 \cap \Omega, \Omega \setminus U) >0 $.

Therefore, (as observed at the start of the proof of Lemma~\ref{L:One_kappa_geodesic}),
by Lemma~\ref{L:Varied_Loc_Kob_metric}, there exists $L > 0$ which depends on $U, W_1$ such that for all $z \in W_1 \cap \Omega$ and $v \in \C^d$,
\begin{equation*}
\kappa_{U \cap \Omega}(z; v) \leq \left( 1 + L e^{-\koba_{\Omega} (z, \Omega \setminus W_1) } \right)\kappa_{\Omega}(z; v).
\end{equation*}
Suppose that $\gamma: [0, a] \longrightarrow \Omega$. Then by doing similar computations as of (\ref{E:One_kappa_geo_ineq}) of Lemma~ \ref{L:One_kappa_geodesic}, for every $s_1, s_2 \in [0, a]$, we have
\begin{equation}\label{E:One_kappa_geo_ineq_2}
\begin{split}
\koba_{U \cap \Omega}(\gamma(s_1), \gamma(s_2)) 
& \leq \koba_{\Omega}(\gamma(s_1), \gamma(s_2)) + \kappa + L \int_{0}^{a} e^{-\koba_{\Omega} (\gamma(t), \Omega \setminus W_1) } dt.
\end{split}
\end{equation}

Now, by Lemma~\ref{L:Comple_dist_Compar}, there exists $L_0 > 0$ which depends on $o, W, W_1, U$ such that for all $z \in W \cap \Omega$, 
\[
\koba_{\Omega} ( z, o) \leq \koba_{\Omega} (z, \Omega \setminus W_1) + L_0.
\]
Next, let $t_0 \in [0, a]$ such that for all $t \in [0, a]$,
\[
\koba_{\Omega}(\gamma(t_0), \Omega \setminus W_1) \leq \koba_{\Omega}(\gamma(t), \Omega \setminus W_1).
\]
Since $\gamma$ is a $(1, \kappa)$-almost-geodesic of $(\Omega, \koba_{\Omega} )$, we have by a simple computation (using the above inequalities and the triangle inequality), for all $t \in [0, a]$
\begin{equation*}
   \begin{split}
    |t - t_0| - \kappa 
    &\leq 2\koba_{\Omega} (\gamma(t), \Omega \setminus W_1) + 2L_0.
   \end{split} 
\end{equation*}
This gives, for all $ t \in [0, a]$
\[
-\koba_{\Omega}(\gamma(t), \Omega \setminus W_1) \leq \frac{ -|t - t_0|}{2} + \frac{2L_0 + \kappa}{2}.
\]
After using the above inequality in inequality~(\ref{E:One_kappa_geo_ineq_2}), we obtain for every $s_1, s_2 \in [0, a]$,
\begin{equation}
\begin{split}
\koba_{U \cap \Omega}(\gamma(s_1), \gamma(s_2))
& \leq \koba_{\Omega}(\gamma(s_1), \gamma(s_2)) + \kappa + LC_3 \int_{0}^{a} e^{-\frac{|t - t_0|}{2} } dt\\
&\leq \koba_{\Omega}(\gamma(s_1), \gamma(s_2)) + \kappa + 4LC_3 C_0
\end{split}
\end{equation}
where $C_3 = e^{\frac{2L_0 + \kappa}{2}}$, and the last inequality follows from the following bound:
\begin{equation}
 \int_{0}^{+\infty} e^{-\frac{|t - t_0|}{2} } dt \leq 4.
\end{equation}
This shows that $\gamma$ is a $(1, \kappa_0)$-quasi-geodesic for $\kappa_0 = \kappa + 4LC_3$ (note the fact that $\kappa_0$ depends only on $W,W_1,U$ and $\kappa$). Hence the proof is complete.
\end{proof}

\begin{proof}[Proof of Theorem~\ref{T:Main_Thm_1}]
Let $W_1, W_2$ be open sets such that $W \subset \subset W_1 \subset \subset W_2 \subset \subset W_0$. Let $z, w \in W \cap \Omega$. Let $ \kappa > 0$ and $\gamma : [0, a] \longrightarrow \Omega$ be a $(1, \kappa)$-almost-geodesic of $(\Omega, \koba_{\Omega} )$ joining $z$ and $w$. 

\noindent
{\bf Case 1.} $\gamma \subset W_2 \cap \Omega $. 

Since $\koba_{\Omega}(W_0 \cap \Omega, \Omega \setminus U) > 0$, by replacing $W$ and $W_1$  by $W_2$ and $W_0$ respectively in Lemma~\ref{L:One_kappa_geodesic}, there exists $\kappa_0 > 0$ which depends on $W_2, W_0, U$ and $\kappa$ such that $\gamma$ is a $(1, \kappa_0)$-quasi-geodesic of $(U \cap \Omega, \koba_{U \cap \Omega} )$.
This gives, using the fact that $\gamma$ is a $(1, \kappa_0)$-quasi-geodesic of $(U \cap \Omega, \koba_{U \cap \Omega} )$ and $\gamma$ is a $(1, \kappa)$-almost-geodesic of $(\Omega, \koba_{\Omega} )$,
\[
\koba_{U \cap \Omega }(z, w) \leq a + \kappa_0 \leq \koba_{ \Omega }(z, w) + \kappa + \kappa_0.
\]
\noindent
{\bf Case 2.} $\gamma \nsubseteq W_2 \cap \Omega $.
Now, set
\[
s_0 = \sup \{t : t \in [0, a]\,\, \text{such that}\,\, \gamma(s) \in W_1 \,\forall s \in [0, t]\},
\]
\[
t_0= \inf \{t : t \in [0, a]\,\, \text{such that}\,\, \gamma(s) \in W_1 \,\forall s \in [t, a]\}.
\]
Note that by definition, $\gamma([0, s_0]) \subset W_2$ and $\gamma([t_0, a]) \subset W_2$. Hence $\gamma|_{[0, s_0]}$ and $ \gamma|_{[t_0, a]}$ are $(1, \kappa_0)$-quasi-geodesics of $(U \cap \Omega, \koba_{U \cap \Omega} )$ (as observed in Case 1). 

Now, let $\sigma^1 : [0, b^1] \longrightarrow U \cap \Omega $ and $\sigma^2: [0, b^2] \longrightarrow U \cap \Omega $ be  $(1, \kappa)$-almost-geodesics of $(U \cap \Omega, \koba_{U \cap \Omega} )$ joining $z$ and $\gamma(s_0)$, and $w$ and $\gamma(t_0)$ respectively. Let $U' \subset \subset U$ such that $\#(U' \cap \partial \Omega) > 1$ and $W_0 \subset \subset U'$. Since $U' \cap \partial (U \cap \Omega) = U'\cap \partial \Omega$, every pair of distinct boundary points in $U'\cap \partial \Omega$ has weak visibility property with respect to $(U \cap \Omega, \koba_{\Omega} )$.
Then, by {\it (2)} of Proposition ~\ref{P:Visibility_Outside_Point} by replacing the domain $\Omega$ by $ U\cap \Omega$, $U$ by $U'$, $W$ by $W$ and $V$ by $ (U \cap \Omega) \setminus W_1$, there exists a compact set $K \subset U \cap \Omega$ such that $\sigma^1 \cap K$ and $\sigma^2 \cap K$ are non-empty sets. Let $ o^1 \in \sigma^1 \cap K$ and $o^2 \in \sigma^2 \cap K$. 

Now, using the fact that $\sigma^1 $ and $\sigma^2 $ are $(1, \kappa)$-almost-geodesics of $(U \cap \Omega, \koba_{U \cap \Omega} )$ and the triangle inequality, we have the following  inequality
\begin{equation}\label{E:C_inequality_1}
\begin{split}
    \koba_{U \cap \Omega}(z, \gamma(s_0))
    &\geq \koba_{U \cap \Omega}(z, o^1) + \koba_{U \cap \Omega}(o^1, \gamma(s_0))  - 3\kappa\\
    &\geq \koba_{U \cap \Omega}(z, o) - \koba_{U \cap \Omega}(o, o^1) + \koba_{U \cap \Omega}(o, \gamma(s_0)) - \koba_{U \cap \Omega}(o, o^1)  - 3\kappa\\
    &\geq \koba_{U \cap \Omega}(z, o) + \koba_{U \cap \Omega}(o, \gamma(s_0)) - 2 \sup_{y \in K }\koba_{U \cap \Omega}(o, y)  - 3\kappa,
\end{split}    
\end{equation}
and similarly
\begin{equation}\label{E:C_inequality_2}
\begin{split}
    \koba_{U \cap \Omega}(w, \gamma(t_0))
    &\geq \koba_{U \cap \Omega}(w, o) + \koba_{U \cap \Omega}(o, \gamma(t_0)) - 2 \sup_{y \in K }\koba_{U \cap \Omega}(o, y)  - 3\kappa.
\end{split}    
\end{equation}
Let $C_0 \defeq 2 \sup_{y \in K }\koba_{U \cap \Omega}(o, y)  + 3\kappa$.

Now, using the fact that $\gamma$ is a $(1, \kappa)$-almost-geodesic of $ (\Omega, \koba_{\Omega} )$, and $\gamma|_{[0, s_0]}$ and $ \gamma|_{[t_0, a]}$ are $(1, \kappa_0)$-quasi-geodesics of $ (U \cap\Omega, \koba_{U \cap \Omega} )$, we have
\begin{equation*}
\begin{split}
    \koba_{\Omega}(z, w) 
    &\geq a - \kappa\\
    &\geq a - t_0 + s_0 -\kappa \quad [\text{since $s_0 \leq t_0$ }]\\
    &\geq \koba_{U \cap \Omega}(w, \gamma(t_0) ) - \kappa_0 + \koba_{U \cap \Omega}(\gamma(s_0), z) - \kappa_0  - \kappa\\ 
    &\geq \koba_{U \cap \Omega} (w, o) + \koba_{U \cap \Omega}(o, \gamma(t_0) ) - C_0 + \koba_{U \cap \Omega} (z, o) + \koba_{U \cap \Omega}(o, \gamma(s_0) ) - C_0 - 2 \kappa_0- \kappa\\
    &\geq \koba_{U \cap \Omega} (z, o) + \koba_{U \cap \Omega} (w, o)  - 2C_0- 2 \kappa_0- \kappa\\
    &\geq \koba_{U \cap \Omega} (z, w)  - 2C_0- 2 \kappa_0 - \kappa.
\end{split}    
\end{equation*}
The fourth inequality follows from (\ref{E:C_inequality_1}) and (\ref{E:C_inequality_2}). 
This proves the theorem by taking $C \defeq 2C_0 + 2 \kappa_0 + \kappa$.
\end{proof}

\begin{proof}[Proof of Theorem~\ref{T:Main_Thm_2}]
Let $W_1, W_2$ be open sets such that $W \subset \subset W_1 \subset \subset W_2 \subset \subset U$. Let $z, w \in W \cap \Omega$. Let $ \kappa > 0$ and $\gamma : [0, a] \longrightarrow \Omega$ be a $(1, \kappa)$-almost-geodesic of $(\Omega, \koba_{\Omega} )$ joining $z$ and $w$. 

\noindent
{\bf Case 1.} $\gamma \subset W_2 \cap \Omega $. 

Now, replacing $W$ by $W_2$ in Lemma~\ref{L:One_kappa_geodesic_Omega}, there exists $\kappa_0 > 0$ which depends on $W_2, U$ and $\kappa$ such that $\gamma$ is a $(1, \kappa_0)$-quasi-geodesic of $(U \cap \Omega, \koba_{U \cap \Omega} )$.

This gives, using the fact that $\gamma$ is a $(1, \kappa_0)$-quasi-geodesic of $(U \cap \Omega, \koba_{U \cap \Omega} )$ and $\gamma$ is a $(1, \kappa)$-almost-geodesic of $(\Omega, \koba_{\Omega} )$,
\[
\koba_{U \cap \Omega }(z, w) \leq a + \kappa_0 \leq \koba_{ \Omega }(z, w) + \kappa + \kappa_0.
\]
\noindent
{\bf Case 2.} $\gamma \nsubseteq W_0 \cap \Omega $.
Now, set
\[
s_0 = \sup \{t : t \in [0, a]\,\, \text{such that}\,\, \gamma(s) \in W_1 \,\forall s \in [0, t]\},
\]
\[
t_0= \inf \{t : t \in [0, a]\,\, \text{such that}\,\, \gamma(s) \in W_1 \,\forall s \in [t, a]\}.
\]

Note that by definition, $\gamma([0, s_0]) \subset W_2$ and $\gamma([t_0, a]) \subset W_2$. Hence $\sigma^1 \defeq \gamma|_{[0, s_0]}$ and $\sigma^2 \defeq \gamma|_{[t_0, a]}$ are $(1, \kappa_0)$-quasi-geodesic of $(U \cap \Omega, \koba_{U \cap \Omega} )$ (as observed in Case 1). Also note that $\sigma_1: [0, s_0] \longrightarrow \Omega$  and $\sigma_2: [t_0, a] \longrightarrow \Omega$ are $(1, \kappa)$-almost-geodesics of $(\Omega, \koba_{\Omega} )$ such that $\sigma_1(0), \sigma_2(a) \in W \cap \Omega$ and $\sigma_1(s_0), \sigma_2(t_0) \in \Omega \setminus W_1 $. Furthermore, $\overline W \cap \overline{ \Omega \setminus W_1} = \emptyset$. Now, since $(\Omega, \koba_{\Omega} )$ has  weak visibility for every pair of distinct points in $U \cap  \partial \Omega$, by {\it (2)} of Proposition ~\ref{P:Visibility_Outside_Point} after replacing $W$ and $V$ by $W$ and $\Omega \setminus W_1$ respectively, for $\kappa $ as above, there exists a compact set $K \subset  \Omega$  such that $\sigma^1 \cap K \neq \emptyset$ and $\sigma^2 \cap K \neq \emptyset$. Let $o^1 \in \sigma^1 \cap K \subset \overline W_1 \cap \Omega$ and $o^2 \in \sigma^2 \cap K \subset \overline  W_1 \cap \Omega$. Now, applying the fact that $\gamma$ is a $(1, \kappa)$-almost-geodesic of $(\Omega, \koba_{\Omega} )$ and $\sigma_1$ and $\sigma_2$ are $(1, \kappa_0)$-quasi-geodesic of $(U \cap \Omega, \koba_{U \cap \Omega} )$ and by the triangle inequality, we obtain the following sequence of inequalities:
\begin{equation*}
\begin{split}
    \koba_{\Omega}(z, w) 
    &\geq a - \kappa\\
    &\geq a - t_0 + s_0 -\kappa \quad [\text{since $s_0 \leq t_0$ }]\\
    &\geq \koba_{U \cap \Omega}(w, \gamma(t_0) ) - \kappa_0 + \koba_{U \cap \Omega}(\gamma(s_0), z) - \kappa_0  - \kappa\\ 
    &\geq \koba_{U \cap \Omega}(w, o^2) + \koba_{U \cap \Omega}(o^2,  \gamma(t_0) ) - 3\kappa_0 + \koba_{U \cap \Omega}(z, o^1) + \koba_{U \cap \Omega}(o^1, \gamma(s_0) ) - 3\kappa_0 - 2 \kappa_0 - \kappa\\
    &\geq \koba_{U \cap \Omega}(w, o^2) + \koba_{U \cap \Omega}(z, o^1)  - 8 \kappa_0 - \kappa\\
    &= \koba_{U \cap \Omega}(w, o^2) + \koba_{U \cap \Omega}(o^2, o^1) + \koba_{U \cap \Omega}(z, o^1)  - \koba_{U \cap \Omega}(o^2, o^1) - 8 \kappa_0- \kappa\\
    &\geq \koba_{U \cap \Omega}(z, w) - \sup_{x, y \in K \cap \overline W_1} \koba_{U \cap \Omega}(x, y) - 8\kappa_0 - \kappa.
\end{split}    
\end{equation*}
Note that $K \cap \overline  W_1$ is a relatively compact subset of $U \cap \Omega$, and hence $\sup_{x, y \in K \cap \overline  W_1} \koba_{U \cap \Omega}(x, y) < + \infty$. This proves the theorem by taking $C \defeq \sup_{x, y \in K \cap \overline  W_1} \koba_{U \cap \Omega}(x, y) + 8\kappa_0 + \kappa$.

\end{proof}
\begin{proof}[Proof of Corollary~\ref{C:Pseudo_finte_type}] 
This follows from Theorem~\ref{T:Main_Thm_2} by noting that $\Omega$ is  also a Goldilocks domain, by \cite{Bharali_Zimmer_1}. Hence $(\Omega, \koba_{\Omega} )$ has weak visibility property for every pair of distinct boundary points in $\partial \Omega$.
\end{proof}
Next, we give a proof of Theorem~\ref{T:Main_Thm_4} which is similar to the proof of \cite[Theorem]{NOT} by Nikolov-\"Okten-Thomas although our result holds in a more general setting.
\begin{proof}[Proof of Theorem~\ref{T:Main_Thm_4}]
We break the problem in two cases. Set $c\defeq \koba_{\Omega}(W_1 \cap \Omega, \Omega \setminus W_2) > 0$.
{\bf Case 1.}  $z, w \in W \cap \Omega$ such that $\koba_{\Omega} (z, w) \geq \frac{1}{4} \koba_{\Omega}(W_1 \cap \Omega, \Omega \setminus W_2) = \frac{c}{4}$. In this case, using the following (additive localization):
\[
\koba_{U \cap \Omega}(z, w) \leq \koba_{\Omega}(z, w) + C_0,
\] 
we get
\[
\frac{\koba_{U \cap \Omega}(z, w)}{\koba_{\Omega}(z, w)} \leq 1  + \frac{C_0}{\koba_{\Omega}(z, w)} \leq  1  + \frac{4C_0}{c}.
\]
{\bf Case 2.}  $z, w \in W \cap \Omega$ such that $\koba_{\Omega} (z, w) < c/4 $.
Suppose that $\gamma_n : [0, a_n] \longrightarrow \Omega$ is a sequence of $(1, c/4n)$-almost-geodesics such that $\gamma_n(0) = z$ and $\gamma_n (a_n) = w$ for all $n \in \posint$ (existence follows from \cite[Proposition~5.3]{Bharali_Zimmer_2}). 

\noindent {\bf Claim.} $\gamma_n \subset W_2$ for all $n \in \posint$.
\begin{proof}[Proof of the claim]
If  not then there exists $N \in \posint$ such that $\gamma_N \cap \Omega \setminus W_2 \neq \emptyset$. Let $z_0 \in \gamma_N \cap \Omega \setminus W_2 $. Since $z, w \in W \cap \Omega$ and $z_0 \in \Omega \setminus W_2$, it follows by definition that
\[
\koba_{\Omega}(z,z_0) \geq  \koba_{\Omega}(W_1 \cap \Omega, \Omega \setminus W_2)= c, \quad \koba_{\Omega}(w,z_0) \geq \koba_{\Omega}(W_1 \cap \Omega, \Omega \setminus W_2)= c.
\]
Since $\gamma_N$ is a $(1, c/4N)$-almost-geodesic of $(\Omega, \koba_{\Omega})$, using the inequalities above, we have 
\[
\koba_{\Omega}(z,w) \geq  \koba_{\Omega}(z,z_0) + \koba_{\Omega}(z_0,w) - 3c/4N  \geq  c + c - 3c/4N 
> c.
\]
This is a contradiction because by assumption $\koba_{\Omega}(z,w) < c/4$. Hence the claim is true.
\end{proof}
Now, using the Royden's Localization Lemma, it follows for all $z \in W_1 \cap \Omega$ and $v \in \C^d$,
\[
\kappa_{U \cap \Omega}(z;v) \leq c \kappa_{\Omega}(z;v).
\]
This gives (by definition $\gamma_n$ is is absolutely continuous, hence differentiable almost everywhere on $[0, a_n]$ for all $n \in \posint$) by \cite[Theorem~3.1]{Venturini}, for all $n \in \posint$,
\begin{equation}
\begin{split}
\koba_{U \cap \Omega}(z, w) 
& \leq \int_{0}^{a_n} \kappa_{U \cap \Omega}(\gamma_n(t); \gamma_n^{\prime}(t)) dt\\
& \leq c\int_{0}^{a_n} \kappa_{ \Omega}(\gamma_n(t); \gamma_n^{\prime}(t)) dt\\
&\leq  c (a_n - c/4n) +c^2/4n\\
& \leq c\koba_{\Omega}(z, w) + c^2/4n.
\end{split}
\end{equation}
By letting $n \to \infty$, we get 
\[
\koba_{U \cap \Omega}(z, w) \leq  c\koba_{\Omega}(z, w).
\]
By combining both cases, we obtain the required result.
\end{proof}
\begin{proof}[Proof of Theorem~\ref{T:Main_Thm_3}]
Since for the bounded domain $\Omega$, $\koba_{\Omega}(W_1 \cap U, \Omega \setminus W_2) > 0$ for every open sets $W_1, W_2$ with $W_1 \subset \subset W_2$ and $W_1 \cap \Omega \neq \emptyset$. This can be seen from \cite[Proposition 3.5.(1)]{Bharali_Zimmer_1}. Hence the result follows from Theorem~\ref{T:Main_Thm_4}.
\end{proof}
\noindent \textbf{Acknowledgements.} The author would like to thank Anwoy Maitra, Vikramjeet Singh Chandel and Sushil Gorai for useful  discussions and Gautam Bharali for useful suggestions and corrections. The author also would like to thank the referee for the corrections and many useful suggestions which improved the exposition of the previous version of this paper, and also for suggesting short and transparent proof of Lemma~\ref{L:Comple_dist_Compar}, and consequently, the proofs of the main theorems are simplified.

\end{document}